\title[On the stable recovery of deep  structured linear networks under sparsity constraints]{On the stable recovery of deep structured linear networks under sparsity constraints}
\newcommand{\red }{\color{red}}
\newcommand {\green}[1] {\textcolor[rgb]{0.0,0.6,0}{#1}}
\newcommand {\one} {\mathbbm{1}}
\newcommand {\RR} {\mathbb R}
\newcommand {\NN} {\mathbb N}
\newcommand {\N}[1] { [\![#1 ]\!] }
\newcommand {\calA} {\mathcal A}
\newcommand {\calS} {\mathcal S}
\newcommand {\calT} {\mathcal T}
\newcommand {\calD} {\mathcal{D}}
\newcommand {\cald} {\Delta}
\newcommand {\SF} {{\mathcal M}}
\newcommand {\SSp} {{\mathcal P}(\iS)}
\newcommand {\SSpo} {{\mathcal P}(\iSo)}
\newcommand {\Mod}[1] {{\SF}^{#1}}
\newcommand {\TS} {\RR^{S^H}}   
\newcommand {\wS} {\RR^{S\times H}}    
\newcommand {\wSdiag} {\RR^{S\times H}_{\mbox{\tiny diag}}}    
\newcommand {\iS} {\N{S}^H}  
\newcommand {\iSo} {\{1,\cdots,S\}^H}  
\newcommand {\ibf} { \mathbf{i}}      
\newcommand {\Ibf} { \mathbf{I}}      
\newcommand {\jbf} { \mathbf{j}}      
\newcommand {\ebf} { \mathbf{e}}      
\newcommand {\wbf} { \mathbf{w}}      
\newcommand {\vbf} { \mathbf{v}}      
\newcommand {\pbf} { \mathbf{p}}      
\DeclareMathOperator {\rk} {rk}
\newcommand {\RH}[1] {\rk\left( #1 \right)}   
\DeclareMathOperator {\myspan} {Span}
\newcommand {\SPAN}[1] { \myspan\left( #1 \right) }
\DeclareMathOperator {\myker} {Ker}
\newcommand {\KER}[1] { \myker\left( #1 \right) }
\newcommand {\DIM}[1] {\dim\left( #1 \right)}   
\newcommand {\PROJ}[1] {{ \mathbf{P}}_{ #1 } }   
\newcommand {\tree}{\mathcal G}
\newcommand {\nodes} {\mathcal N}
\newcommand {\edges} {\mathcal E}
\newcommand {\leaves} {\mathcal F}
\newcommand {\PP} {{\mathcal{P}}}
\newcommand {\multiconv}[2] { {\mathcal T}^{#1}(#2)\, }
\newcommand {\codeset} {\mathbb R^{N  |\leaves|}}
\DeclareMathOperator {\supp} {supp}
\newcommand {\SUPP}[1] {\supp\left( #1 \right)}
\newcommand {\class}[1] {\{ #1 \}}
\newcommand {\SNSP} {sparse-deep-NSP }
\newcommand {\SNSPlong} {sparse-deep-Null Space Property }
\newcommand {\Id}{Id}
\newcommand {\Idl}{M_0}
\newtheorem{defi}{Definition}
\newtheorem{prop}{Proposition}
\newtheorem{thm}{Theorem}
\newtheorem{informTh}{Informal theorem}
\begin{document}

\maketitle

\begin{abstract}

  We consider a deep structured linear network under sparsity constraints. We study sharp conditions guaranteeing the stability of the optimal parameters defining the network. More precisely, we provide sharp  conditions on the network architecture and the sample under which the error on the parameters defining the network  scales linearly with the reconstruction error (i.e. the risk). Therefore, under these conditions, the weights obtained with a successful algorithms are well defined and only depend on the architecture of the network and the sample. The features in the latent spaces are stably defined. The stability property is required in order to interpret the features defined in the latent spaces. It can also lead to a guarantee on the statistical risk. This is what motivates this study.  
  
The analysis is based on the recently proposed Tensorial Lifting. The particularity of this paper is to consider a sparsity prior. This leads to a better stability constant. As an illustration, we detail the analysis and provide sharp stability guarantees for convolutional linear network under sparsity prior. In this analysis, we distinguish the role of the network architecture and the sample input. This  highlights the requirements on the data in connection to parameter stability.
\end{abstract}

\begin{keywords}
Stable recovery, deep structured linear networks, convolutional linear networks, feature robustess.
\end{keywords}

\section{Introduction}
\subsection{The stability property}
Artificial neural networks have improved the state of the art and continue to improve it in a large number of applications in science and technology. Their empirical success far exceeds the  understanding of their theoretical properties. In particular, despite the very significant efforts of a very active research community, some behaviors remain partially understood: Why do optimization algorithms find good solutions? Why do over-parameterized neural networks retain good generalization properties? What classes of functions can be approximated by neural networks? With which minimal network architecture?

The work presented in this paper is of a theoretical nature and focuses on a stability property for the parameters leading to a low objective function. The statements are for a regression problem. To explain this stability property in a simplified context, we consider a  parameterized family of  functions $f_\wbf$ (e.g. neural networks), the parameter being $\wbf$ ; the parameter space is equipped with a metric\footnote{To be accurate, the metric is defined between equivalence classes reflecting invariance properties of the family $f$. For instance, in the case of neural networks, we would like to consider weight rescaling and/or neurons re-arrangement.} $d$; we consider a sample $(x_i,y_i)_{i=1..n}$ of size $n\in\NN$. The stability statement then takes the following form.

\begin{informTh}{\bf Stability Guarantee}\label{inform}

If a certain {\em condition} on the family $f$ and the sample is satisfied then we have the following {\em stability property}:

There exists $C>0$ such that for $\eta$ sufficiently small:
For any $\wbf$ and $\wbf'$ such that
\[ \sum_{i=1}^n \| f_\wbf(x_i) - y_i  \|^2   \leq \eta \qquad\mbox{and}\qquad \sum_{i=1}^n \| f_{\wbf'}(x_i) - y_i  \|^2   \leq \eta
 \]
we have
\[d(\wbf,\wbf') \leq C \eta.
\]
\end{informTh}
Notice first that the above informal theorem provides a sufficient condition guaranteeing the stability property. Subsequently, depending on the nature of the network under consideration, necessary and sufficient conditions or necessary conditions will be stated. The interest of the stability property is that it guarantees:
\begin{itemize}
\item {\bf Feature stability and interpretability: } When the optimal $\wbf$ is stable, the features in the latent spaces and the output of the network are stably defined in the sense that the parameters $\wbf$ and $\wbf'$ for which $\eta$ is small define similar features and output. The features and the output only depend on the value of 
\[\sum_{i=1}^n \| f_\wbf(x_i) - y_i  \|^2 \]
and do not depend on the algorithm used to find $\wbf$. In particular, they do not depend on its initialization, the numerical parameters, the order of the samples in the stochastic algorithm, the numerical tricks etc The parameter $\wbf$ and therefore the function $f_\wbf$ only depends on $f$ (i.e.: the network architecture, for neural networks) and the sample  $(x_i,y_i)_{i=1..n}$. For neural networks, this is a strong guaranty when interpreting the influence of the features on the output.
 \item {\bf Stable recovery: } If we make the additional assumption that the data are generated from the family $f$ for an ideal parameter $\wbf$ (up to an accuracy smaller than $\eta$), then the stability guaranty ensures that any parameter $\wbf'$ for which  
 \[  \sum_{i=1}^n \| f_{\wbf'}(x_i) - y_i  \|^2
 \]
 is sufficiently small is close to the ideal $\wbf$.
 
 The above additional assumption can be provided by approximation theory statement. This is, for instance, the usual argument in compressed-sensing \cite{elad_book}. When solving a linear inverse problem under sparsity constraints, the sparsity hypothesis is not so restrictive because many signals/images classes are compressible. We can expect the same phenomenon to happen for neural networks for which such statements are often referred to as ``expressivity'' or ``expressive power''.  For instance, we can expect to have such guaranties when the neural network approximates a smooth function \cite{boolcskei2019optimal,guhring2019error,gribonval:hal-02117139}. 
\end{itemize}

\subsection{Existing results on the stable recovery}
Establishing stable recovery guarantees for neural networks is a difficult subject which has not been addressed very often. The subject remains largely unexplored.
To the best of our knowledge conditions guaranteeing the stability property for neural networks have been established in   \cite{arora2014provable,brutzkus2017globally,li2017convergence,sedghi2014provable,zhong2017recovery,MalgouyresLandsbergITW,MalgouyresLandsberg_long}.
A negative statement, exhibiting an unstable configuration when the weights go to infinity is given in \cite{Petersen2020}.

Among them, \cite{brutzkus2017globally,li2017convergence,zhong2017recovery} consider a family of networks with one hidden layer. The article \cite{sedghi2014provable} focuses on the recovery of the parameters defining one layer in a arbitrarily deep networks. The articles \cite{arora2014provable,MalgouyresLandsbergITW,MalgouyresLandsberg_long} consider networks without depth limitation.

In \cite{brutzkus2017globally}, the authors consider the minimization of the population risk. The input is assumed Gaussian and the output is generated by a network involving one linear layer followed by ReLU and a mean. The number of intermediate nodes is smaller than the input size. They provide conditions guaranteeing that, with high probability, a randomly initialized gradient descent algorithm converges to the true parameters. The authors of \cite{li2017convergence} consider a framework similar to  \cite{brutzkus2017globally}. They show that the stochastic gradient descent converges to the true solution. In \cite{zhong2017recovery}, the authors consider a non-linear layer followed by a linear layer. The size of the intermediate layer is smaller than the size of the input and the size of the output is $1$. They prove that the gradient algorithm minimizing the empirical risk converges to the true parameters, for the particular initialization described in the article.

The authors of \cite{sedghi2014provable} consider a feed-forward neural network and show that, if the input is Gaussian or its distribution is known, a method based on moments and sparse dictionary learning can retrieve the parameters defining the first linear transform. Nothing is said about the stability or the estimation of the other transformations.

The authors of \cite{arora2014provable} consider deep feed-forward networks which are very sparse and randomly generated. They show that they can be learned with high probability one layer after another. However, very sparse and randomly generated networks are not used in practice and one might want to study more versatile structures. 

The article \cite{MalgouyresLandsbergITW} studies deep structured linear networks and uses the same tensorial lifting we use here. This result has been extended in \cite{MalgouyresLandsberg_long}, where necessary and sufficient conditions of stable recovery have been established for a general constraint on the parameters defining the network. In the present article, we specialize the analysis to the sparsity constraint. We also obtain necessary and sufficient conditions of stable recovery. However, we obtain a better stability constant (the constant $C$ in Informal Theorem \ref{inform}). The difference is of the same nature as when the smallest singular value is replaced by a lower RIP constant in compressed sensing \cite{elad_book}.  Moreover, in the analysis dedicated to convolutional linear networks, we separate the hypotheses on the data $(x_i)_{i=1..n}$ and the network architecture. This  highlights the importance of having a full row rank $X$, where $X$ is the concatenation of the data $(x_i)_{i=1..n}$,  and shows the role of the smallest singular value of $X$ in this context. These are the two main contributions of the paper.

Finally, denoting $H$ the number of factors/layers, the approach developed in this paper extends to $H\geq 3$ existing compressed sensing results for $H\leq 2$. In particular, when $H=1$, the considered problems boils down to a compressed sensing problem \cite{elad_book}. When $H=2$ and when extended to other constraints on the parameters $\wbf$, the statements apply to already studied problems such as: low rank approximation \cite{candes2013phaselift}, Non-negative matrix factorization \cite{lee1999learning,donoho2003does,laurberg2008theorems,arora2012computing}, dictionary learning \cite{Jenatton}, phase retrieval \cite{candes2013phaselift}, blind deconvolution   \cite{ahmed2014blind,choudhary2014identifiability,DBLP:journals/corr/LiLSW16}. Most of these papers use the same lifting property we are using. They further propose to convexify the  problem. A more general bilinear framework is considered in \cite{choudhary2014identifiability}.


\subsection{The considered sparse networks}
As in \cite{MalgouyresLandsberg_long}, we consider {\em structured linear networks}. The layers can be {\em convolutional} or {\em feedforward}. The network has at least one hidden layer and can be {\em deep}. The  network is not biased. We give in this section all the notations on networks.

Throughout the paper, we consider $H\geq 2$, $S\geq 2$, $m_0 \hdots   m_{H} \in \NN$ and  write $m_H=m$. We consider a network and assume its architecture fixed. It has $H-1$ hidden layers. The  layer $0$ corresponds to the inputs, the layer $H$ to the output. The hidden layers correspond to the indexes $1,\cdots, H-1$. For $h\in\{0,\cdots, H\}$, $m_h$ is the size of the layer $h$. We assume that the whole network is parameterized by an element of $\wS$, say $\wbf\in \wS$. The architecture of the network is defined by linear mappings
\begin{eqnarray}\label{Mh}
M_h : \RR^S & \longrightarrow & \RR^{m_h\times m_{h-1}} \\
w & \longmapsto & M_h(w) \nonumber
\end{eqnarray}
for $h\in\{1,\cdots, H\}$.  For all $h\in\{1,\cdots, H\}$, the linear part of the transformation that maps the content of the layer $h-1$ to the layer $h$ is parameterized by $\wbf_h\in\RR^S$ and is defined by $M_h(\wbf_h)$.  Modeling the architecture of the network with the operators $M_h$, we can consider many kind of networks. Indeed, depending on the operators $M_h$, the network can include feedforward layers,  convolutional layers and other structured layers tailored to particular structures in the data. The layers might not be fully connected.

The mapping from $\RR^{m_0}$ to $\RR^{m}$ defined by the network is called the  {\em prediction} and it is defined for any $x\in\RR^{m_0}$ by
\[f_\wbf(x) =M_H(\wbf_H)M_{H-1}(\wbf_{H-1}) \cdots M_2(\wbf_2)  M_1(\wbf_1) x.
\]
We use the same notation $f_\wbf$ when applying $f_\wbf$ to every column of $X\in\RR^{m_0\times n}$ and concatenating the results in a matrix in $\RR^{m\times n}$. The abuse of notation is not ambiguous, once in context.

Again, the considered networks do not involve activation functions and biases.  However, as indicated in \cite{MalgouyresLandsberg_long}, the action of the ReLU activation function multiplies the content of any neuron by an element of $\{0, 1 \}$. The choice of the element depends on $\wbf$ and $x$. However, considering $x$ fixed, since $\{0,1\}$ is finite, there is a finite set of possibilities for the action of the ReLU activation function. Said differently, there is a finite number of possibilities for the choice of the neurons that are kept. Therefore, there exists a partition of $\wS$ such that, on every piece of the partition, the action of ReLU is constant. Therefore, on every piece of the partition, the network is a structured linear network as studied in the present paper. Notice moreover that the analysis in \cite{choromanska2015loss,choromanska2015open} take the expectation of the action of ReLU networks (under an un-realistic independence hypothesis) and obtain a {\em structured linear network}. Beside, structured linear network are significantly more general than the {\em deep linear networks} that are often considered (see, among other, \cite{baldi1989neural,kawaguchi2016deep}). 

Throughout the paper, we consider a family of possible supports $\Mod{}\subset\SSpo$, where  $\SSpo$ denotes the set of all  possible supports (the parts of $\iSo$). A classical example is $\Mod{} = \{\calS | \forall h=1..h, |\calS_h|\leq S'\}$, for a given $S'\le S$. We constrain the parameter $\wbf$ to satisfy a sparsity constraint of the form : there exists ${\calS} =(\calS_h)_{ h = 1..H} \in \Mod{}$ such that 
\[\SUPP{{\wbf}} \subset {\calS}
\]
 (i.e.: $\forall h$, $\SUPP{{\wbf}_h} \subset {\calS}_h$). Specializing the analysis to sparsity constraints is one of the main differences between this paper and \cite{MalgouyresLandsbergITW,MalgouyresLandsberg_long}. Sparse networks have been considered in many contexts \cite{ranzato2007efficient,ranzato2008sparse,lee2008sparse,srinivas2017training,louizos2017learning,zhang2016cambricon}; sparse convolutional neural networks have also been considered \cite{liu2015sparse}.

\subsection{The solutions of the problem}

We assume that data are collected in the columns of matrices $X\in\RR^{m_0\times n}$ and $Y\in\RR^{m_H\times n}$.

To establish the stability property, we consider throughout the paper $\overline\calS$ and ${\overline\calS}' \in\Mod{}$, $\overline\wbf$ and $\overline\wbf' \in \wS$ such that
\[\SUPP{{\overline\wbf}} \subset {\overline\calS} \qquad \mbox{and}\qquad \SUPP{{\overline\wbf'}} \subset {\overline\calS}'
\]
and for which
\begin{equation}\label{model_estim-sparse}
\|f_{\overline\wbf}(X) - Y\| = \delta \qquad \mbox{and}\qquad \|f_{\overline\wbf'}(X) - Y\| = \eta
\end{equation}
are small. Generic parameters are denoted without the over-line: $\calS$, $\calS'$, $\wbf$, $\wbf'$ etc

We want to establish a condition guaranteeing that, up to a multiplicative constant, the distance between such $\overline\wbf$ and $\overline\wbf'$ is upper-bound by $\delta+\eta$. As already said, using a true distance would be too restrictive and the true statements involve a distance between equivalence classes of parameters.

\section{Notations and preliminaries on Tensorial Lifting}\label{notation-sec}

Set $\N{H} = \{1,\ldots,H\}$ and $\wS_* = \{\wbf \in \wS| \forall h=1..H, \|\wbf_h\| \neq 0  \}$, where we remind that $\wbf_h\in\RR^S$ contains the parameters defining the transform between layers $h-1$ and $h$. Define an  equivalence relation in $\wS_*$: for any  $\wbf$,  $\vbf \in \wS$, $\wbf \sim \vbf$  if and only if there exists $(\lambda_h)_{h=1..H} \in\RR^H$ such that
\[\prod_{h=1}^H \lambda_h = 1 \qquad \mbox{ and } \qquad \forall h=1..H, \wbf_h = \lambda_h\vbf_h.
\]
Denote the equivalence class of  $\wbf\in\wS_*$ by $[\wbf]$. For any $p\in[1,\infty]$, we denote the usual $\ell^p$ norm by $\|.\|_p$ and  define the mapping   $d_p:\left((\wS_* /\sim) \times (\wS_* /\sim)\right) \rightarrow \RR$ by
\begin{equation}\label{dp_def}
d_p([\wbf], [\vbf]) =  \inf_{\substack{\wbf'\in[\wbf]\cap\wSdiag \\
\vbf'\in[\vbf] \cap \wSdiag }} \|\wbf'-\vbf'\|_p\qquad,\forall \wbf,\, \vbf \in \wS_*,
\end{equation}
where
\[\wSdiag = \{\wbf \in \wS_*| \forall h=1..H, \|\wbf_h\|_\infty = \|\wbf_1\|_\infty  \}.\]
It is proved in \cite{MalgouyresLandsberg_long} that $d_p$ is a metric on $\wS_* /\sim$.

The real valued tensors of order $H$ whose axes are of size $S$ are denoted by $T\in\RR^{S\times \ldots \times S}$. The space of tensors is
 abbreviated  $\TS$. We say that a tensor $T\in\TS$ is  of {\it rank $1$} if and only if there exists a collection of vectors $\wbf\in\wS$ such that,  for any $\ibf = (i_1,\ldots,i_H)\in \iS$,
\[T_\ibf = \wbf_{1,i_1} \ldots \wbf_{H,i_H}.
\]
The set of all the tensors of rank less than $1$ is denoted by $\Sigma_1$. We denote $\Sigma_2=\Sigma_1+\Sigma_1$. Moreover, we parameterize  $\Sigma_1\subset \TS$ using the Segre embedding
\begin{equation}\label{defP}
\begin{array}{rcl}
P: \wS & \longrightarrow & \Sigma_1 \subset \TS \\
\wbf & \longmapsto & (\wbf_{1,i_1}\wbf_{2,i_2} \ldots \wbf_{H,i_H})_{\ibf\in\iS}
\end{array}
\end{equation}

As stated in the next two theorems, we can control the distortion of the distance induced by $P$ and its `inverse'.

\begin{thm}\label{rk1-Ident-thm}{\bf Stability of $[\wbf]$ from $P(\wbf)$, see \cite{MalgouyresLandsberg_long}}

Let $\wbf$ and $\wbf'\in\wS_*$ be such that
$\|P(\wbf')-P(\wbf)\|_\infty\leq \frac{1}{2}  ~ \max\left(\|P(\wbf)\|_\infty,\|P(\wbf')\|_\infty\right) $. For all $p, q\in[1,\infty]$,
\begin{equation}\label{Pinv-lipschitz}
d_p([\wbf], [\wbf']) \leq 7 (HS)^{\frac{1}{p}} \min\left(\|P(\wbf)\|_{\infty}^{\frac{1}{H}-1},\|P(\wbf')\|_{\infty}^{\frac{1}{H}-1} \right) \|P(\wbf)-P(\wbf')\|_{q}.
\end{equation}
\end{thm}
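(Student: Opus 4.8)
Both sides of the inequality are symmetric under exchanging $\hbf$ and $\gbf$ (so is the hypothesis), so I will assume $\|P(\hbf)\|_\infty\ge\|P(\gbf)\|_\infty$ and set $a:=\|P(\hbf)\|_\infty^{1/K}>0$; since the exponent $1/K-1$ is $\le 0$ this gives $a^{1-K}=\min\big(\|P(\hbf)\|_\infty^{1/K-1},\|P(\gbf)\|_\infty^{1/K-1}\big)$. First I would note that $P$ is constant on equivalence classes (the $\lambda_k$ multiply to $1$), hence $\|P(\hbf)\|_\infty=\prod_k\|\hbf_k\|_\infty$, and that for any representatives $\hbf'\in[\hbf]\cap\hSdiag$, $\gbf'\in[\gbf]\cap\hSdiag$ one has $\|\hbf'_k\|_\infty=\|P(\hbf)\|_\infty^{1/K}=a$, $\|\gbf'_k\|_\infty=\|P(\gbf)\|_\infty^{1/K}=:b\le a$ for all $k$, and $\|P(\hbf')-P(\gbf')\|_\infty=\|P(\hbf)-P(\gbf)\|_\infty=:\Delta\le\tfrac12a^K$. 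The plan is then to produce, for a well-chosen such pair, a coordinatewise bound $|\hbf'_{k,j}-\gbf'_{k,j}|\le C\,\Delta\,a^{1-K}$; summing the $KS$ entries yields $\|\hbf'-\gbf'\|_p\le (KS)^{1/p}C\,\Delta\,a^{1-K}$, and since $d_p([\hbf],[\gbf])\le\|\hbf'-\gbf'\|_p$ by \eqref{dp_def} and $\Delta\le\|P(\hbf)-P(\gbf)\|_q$, this is exactly the claimed estimate (with room to spare in the constant $7$).

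\textbf{Reconstruction from an anchor slice.} I would fix an index $\ibf^*=(i_1^*,\dots,i_K^*)$ with $|\hbf'_{k,i_k^*}|=a$ for every $k$, so $|P(\hbf')_{\ibf^*}|=a^K=\|P(\hbf')\|_\infty$. The hypothesis then forces $|P(\gbf')_{\ibf^*}|\ge a^K-\Delta\ge\tfrac12a^K>0$, so every $\gbf'_{l,i_l^*}\neq 0$, and $P(\hbf')_{\ibf^*}$, $P(\gbf')_{\ibf^*}$ have the same sign (their gap is $<a^K$). I would then insert a short \emph{sign-alignment} step: since $\prod_k\SIGN{\hbf'_{k,i_k^*}}\SIGN{\gbf'_{k,i_k^*}}=\SIGN{P(\hbf')_{\ibf^*}}\SIGN{P(\gbf')_{\ibf^*}}=1$, I may rescale each $\gbf'_k$ by $\SIGN{\hbf'_{k,i_k^*}}\SIGN{\gbf'_{k,i_k^*}}\in\{-1,1\}$, which keeps $\gbf'\in[\gbf]\cap\hSdiag$ and makes $\SIGN{\gbf'_{k,i_k^*}}=\SIGN{\hbf'_{k,i_k^*}}$ for all $k$ (this is essential, not cosmetic: a sign mismatch at the anchor would already force $\|\hbf'-\gbf'\|_\infty\ge a$). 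Writing $\ibf(k,j)$ for $\ibf^*$ with its $k$-th entry replaced by $j$, and $A_k:=\prod_{l\neq k}\hbf'_{l,i_l^*}$, $B_k:=\prod_{l\neq k}\gbf'_{l,i_l^*}$, one has $P(\hbf')_{\ibf(k,j)}=\hbf'_{k,j}A_k$, $P(\gbf')_{\ibf(k,j)}=\gbf'_{k,j}B_k$, hence
\[
\hbf'_{k,j}-\gbf'_{k,j}=\frac{P(\hbf')_{\ibf(k,j)}-P(\gbf')_{\ibf(k,j)}}{A_k}+P(\gbf')_{\ibf(k,j)}\,\frac{B_k-A_k}{A_kB_k}.
\]
Because $|A_k|=a^{K-1}$ exactly and $|P(\hbf')_{\ibf(k,j)}-P(\gbf')_{\ibf(k,j)}|\le\Delta$, the first term is at most $\Delta\,a^{1-K}$.

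\textbf{The crux: controlling the partial products.} The second term is where the real difficulty lies: bounding $B_k-A_k$ naively reintroduces differences of the very anchor coordinates I am trying to estimate. The key observation that breaks this circularity is that the partial products are pinned down by a single scalar inequality, $a^K-\prod_l|\gbf'_{l,i_l^*}|=|P(\hbf')_{\ibf^*}|-|P(\gbf')_{\ibf^*}|\le\Delta$. Together with $|\gbf'_{l,i_l^*}|\le b\le a$ this yields, for every $k$, $a^{K-1}-\prod_{l\neq k}|\gbf'_{l,i_l^*}|=a^{K-1}-\big(\prod_l|\gbf'_{l,i_l^*}|\big)/|\gbf'_{k,i_k^*}|\le\Delta/a$; hence $|B_k-A_k|\le\Delta/a$ (by the sign alignment, $A_k$ and $B_k$ share a sign, so this is a difference of magnitudes) and $|B_k|\ge a^{K-1}-\Delta/a\ge\tfrac12a^{K-1}$ (using $\Delta\le\tfrac12a^K$). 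With $|P(\gbf')_{\ibf(k,j)}|\le b^K\le a^K$, the second term is then at most $a^K\cdot(\Delta/a)\big/\big(a^{K-1}\cdot\tfrac12a^{K-1}\big)=2\Delta\,a^{1-K}$, so $|\hbf'_{k,j}-\gbf'_{k,j}|\le 3\Delta\,a^{1-K}$ for all $k,j$ (the degenerate case $K=1$, where $A_k=B_k=1$, being immediate). Summing over the $KS$ coordinates and using $\Delta\le\|P(\hbf)-P(\gbf)\|_q$ and $a^{1-K}=\min\big(\|P(\hbf)\|_\infty^{1/K-1},\|P(\gbf)\|_\infty^{1/K-1}\big)$ closes the argument, with resulting constant $3$, comfortably below $7$. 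I expect the partial-product estimate of this last paragraph to be the only genuinely delicate step; the reductions to $\hSdiag$-representatives, the sign alignment, and the $\ell^p$--$\ell^\infty$ comparisons are routine bookkeeping.
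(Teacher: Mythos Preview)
The paper does not actually prove this theorem; it is quoted from \cite{MalgouyresLandsberg_ACHA} and only stated here, so there is no in-paper proof to compare your argument against.

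That said, your argument is correct and self-contained. The essential ideas---passing to $\hSdiag$-representatives so that $\|\hbf'_k\|_\infty=a$ and $\|\gbf'_k\|_\infty=b$ for all $k$, anchoring at an index $\ibf^*$ where $|P(\hbf')_{\ibf^*}|=a^K$, the sign alignment of the $\gbf'_k$ (valid because the corresponding $\epsilon_k\in\{\pm1\}$ multiply to $1$), and then the slicing identity $P(\hbf')_{\ibf(k,j)}=\hbf'_{k,j}A_k$---are exactly the natural reconstruction of each factor from a rank-one tensor. The step you flag as delicate is indeed the one: the chain $\prod_l|\gbf'_{l,i_l^*}|\ge a^K-\Delta$, then $\prod_{l\neq k}|\gbf'_{l,i_l^*}|\ge(a^K-\Delta)/|\gbf'_{k,i_k^*}|\ge(a^K-\Delta)/a$, together with the sign matching of $A_k$ and $B_k$, yields $|A_k-B_k|\le\Delta/a$ and $|B_k|\ge\tfrac12 a^{K-1}$, and this is what keeps the second term from blowing up. Your final constant $3$ is consistent with (and sharper than) the stated $7$; the extra slack in the published constant presumably absorbs minor variants in how the cited proof organizes the same estimates.
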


\begin{thm}\label{PLip-thm}{\bf `Lipschitz' continuity of $P$, see \cite{MalgouyresLandsberg_long}}

We have for any $q\in[1,\infty]$ and any $\wbf$ and $\wbf'\in\wS_*$,
\begin{equation}\label{P-lipschitz}
 \|P(\wbf)-P(\wbf')\|_{q} \leq S^{\frac{H-1}{q}} H^{1-\frac{1}{q}}  \max\left(\|P(\wbf)\|_{\infty}^{1-\frac{1}{H}} , \|P(\wbf')\|_{\infty}^{1-\frac{1}{H}} \right) d_q([\wbf], [\wbf']).
\end{equation}
\end{thm}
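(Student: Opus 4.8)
The plan is to reduce \eqref{P-lipschitz} to an elementary inequality on products of real numbers, exploiting two facts: $P$ is constant on each $\sim$-equivalence class, and $d_q$ is computed from representatives lying in $\hSdiag$.

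First I would pick representatives $\hbf'\in[\hbf]\cap\hSdiag$ and $\gbf'\in[\gbf]\cap\hSdiag$ (these sets are nonempty, since any $\hbf\in\hS_*$ may be rescaled so that all its factors share a common $\ell^\infty$ norm while the rescaling coefficients multiply to $1$) with $\|\hbf'-\gbf'\|_q\le d_q([\hbf],[\gbf])+\varepsilon$ for an arbitrary $\varepsilon>0$. Because the constraint $\prod_{k=1}^K\lambda_k=1$ in the definition of $\sim$ leaves every entry $\hbf_{1,i_1}\cdots\hbf_{K,i_K}$ unchanged, we have $P(\hbf')=P(\hbf)$ and $P(\gbf')=P(\gbf)$, so it suffices to bound $\|P(\hbf')-P(\gbf')\|_q$. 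Setting $a:=\|\hbf'_1\|_\infty$ and $b:=\|\gbf'_1\|_\infty$, the definition of $\hSdiag$ forces $\|\hbf'_k\|_\infty=a$ and $\|\gbf'_k\|_\infty=b$ for all $k$, whence $\|P(\hbf')\|_\infty=\prod_k\|\hbf'_k\|_\infty=a^K$ and $\|P(\gbf')\|_\infty=b^K$; in particular $\max(a,b)^{K-1}=\max\bigl(\|P(\hbf)\|_\infty^{1-1/K},\|P(\gbf)\|_\infty^{1-1/K}\bigr)$.

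Next I would expand the difference entrywise via the telescoping identity
\[\prod_{k=1}^K a_k-\prod_{k=1}^K b_k=\sum_{j=1}^K\Bigl(\prod_{k<j}a_k\Bigr)(a_j-b_j)\Bigl(\prod_{k>j}b_k\Bigr),\]
applied with $a_k=\hbf'_{k,i_k}$ and $b_k=\gbf'_{k,i_k}$. Bounding $|\hbf'_{k,i_k}|\le a$, $|\gbf'_{k,i_k}|\le b$ and using $a^{j-1}b^{K-j}\le\max(a,b)^{K-1}$ for each $j$, this gives, for every $\ibf=(i_1,\dots,i_K)\in\iS$,
\[\bigl|P(\hbf')_\ibf-P(\gbf')_\ibf\bigr|\le\max(a,b)^{K-1}\sum_{j=1}^K\bigl|\hbf'_{j,i_j}-\gbf'_{j,i_j}\bigr|.\]
It then remains to take $\ell^q$ norms over $\ibf\in\iS$. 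For $q=\infty$ the estimate follows at once from $\sum_j\|\hbf'_j-\gbf'_j\|_\infty\le K\|\hbf'-\gbf'\|_\infty$ (and $S^{(K-1)/q}=1$). For $q<\infty$ I would raise to the power $q$, apply the power-mean inequality $\bigl(\sum_{j=1}^K c_j\bigr)^q\le K^{q-1}\sum_{j=1}^K c_j^q$, swap the two summations, and use that $\sum_{\ibf\in\iS}|\hbf'_{j,i_j}-\gbf'_{j,i_j}|^q=S^{K-1}\|\hbf'_j-\gbf'_j\|_q^q$, since the summand depends only on $i_j$ while the remaining $K-1$ indices each range over $S$ values. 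Summing over $j$ yields $S^{K-1}\|\hbf'-\gbf'\|_q^q$; extracting the $q$th root and substituting $\max(a,b)^{K-1}$ together with $\|\hbf'-\gbf'\|_q\le d_q([\hbf],[\gbf])+\varepsilon$ produces \eqref{P-lipschitz} after letting $\varepsilon\to0$.

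The argument is essentially bookkeeping; the only delicate points are verifying that the chosen $\hSdiag$-representatives are admissible in the infimum defining $d_q$ and that $\|P(\hbf')\|_\infty=a^K$, and then making sure the constants $S^{(K-1)/q}$ and $K^{1-1/q}$ land where they belong (the former coming from counting the free indices, the latter from the power-mean step). I do not anticipate any genuine obstacle.
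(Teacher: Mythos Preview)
Your argument is correct. The telescoping identity for a difference of products, together with the observation that representatives in $\hSdiag$ have $\|P(\hbf')\|_\infty=a^K$, is exactly the natural route, and your bookkeeping for the constants $S^{(K-1)/q}$ and $K^{1-1/q}$ is clean and accurate.

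Note, however, that the present paper does not itself prove this statement: Theorem~\ref{PLip-thm} is quoted from \cite{MalgouyresLandsberg_ACHA}, and no proof appears here. So there is no in-document proof to compare against. Your proof is self-contained and would serve perfectly well as a replacement for the citation; it is also almost certainly the same argument as in the cited reference, since the telescoping decomposition is essentially forced once one works with $\hSdiag$-normalized representatives.
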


The Tensorial Lifting (see \cite{MalgouyresLandsberg_long}) states that for any $M_1$, \ldots, $M_H$ and any $X$ there exists a unique linear map
\[\calA:\TS \longrightarrow \RR^{m\times n},
\]
such that for all $\wbf\in\wS$
\begin{equation}\label{lifting}
M_H(\wbf_H) \cdots M_1(\wbf_1)X = \calA P(\wbf).
\end{equation}
The intuition leading to this equality is that every entry in $M_H(\wbf_H) \cdots M_1(\wbf_1)X$ is a multivariate polynomial whose variables are in $\wbf$. Moreover, every monomial of the polynomials is of the form $a_\ibf P(\wbf)_\ibf$ for $\ibf\in\iS$, where $a_\ibf$ is a coefficient which depends on $M_1$, \ldots, $M_H$ and $X$. The Tensorial Lifting expresses any deep structured linear network using the Segre Embedding and a linear operator $\calA$. The Segre embedding is non-linear and might seem difficult to deal with at the first sight, but it is always the same whatever the network architecture, the sparsity pattern, the action of the ReLU activation function\ldots These constituents of the problem only influence the lifting linear operator $\calA$.

In the next section, we study what properties of $\calA$ are required to obtain the stable recovery. In Section \ref{conv-tree}, we study these properties when $\calA$ corresponds to a sparse convolutional linear network.

\section{General conditions for the stable recovery under sparsity constraint}\label{stable-sparse-sec}

From now on, the analysis differs from the one presented in \cite{MalgouyresLandsberg_long}. It is dedicated to models that enforce sparsity. In this particular situation, we can indeed have a different view of the geometry of the problem. In order to describe it, we first establish some notation.

We define a support by $\calS = (\calS_h)_{ h = 1..H}$, with $\calS_h\subset\N{S}$,  and remind that we denote the set of all supports by $\SSp$ (the parts of $\iS$). For a given support $\calS\in\SSp$, we denote
\[\wS_{\calS} = \{ \wbf \in\wS ~|~ \wbf_{h,i} =0 \mbox{, for all }h=1..H \mbox{ and } i\not\in\calS_h   \}
\]
(i.e., for all $h$, $\SUPP{\wbf_h}\subset \calS_h$) and
\[\TS_{\calS} = \{ T \in\TS ~|~ T_\ibf = 0 \mbox{, if }\exists h=1..H \mbox{, such that } \ibf_h\not\in \calS_h  \}.
\]

We also denote by $\PROJ{\calS}$ the orthogonal projection from $\TS$ onto $\TS_{\calS}$. It has the closed-form expression: for all $T\in\TS$ and all $\ibf\in\iS$
\begin{equation}\label{proj_def}
(\PROJ{\calS}T)_\ibf = \left\{\begin{array}{ll}
{T}_\ibf & \mbox{, if } \ibf \in\calS, \\
0 & \mbox{, otherwise.}
\end{array}\right.
\end{equation}


We consider different operators and define for any  $\calS \in\SSp$
\begin{equation}\label{defAS}
\calA_\calS = \calA \PROJ{\calS}.
\end{equation}
We will use later on that for any $\calS$ and $\calS'\in\SF$ and for any $\wbf\in\wS_{\calS}$, or any $\wbf\in\wS_{\calS'}$, or any $\wbf\in\wS_{\calS\cup\calS'}$,  we have
\begin{eqnarray}
\calA_{\calS\cup\calS'} P(\wbf) & = & \calA P(\wbf), \label{ASSpP}\\
 & = &  M_H(\wbf_H)\cdots M_1(\wbf_1)X. \nonumber
\end{eqnarray}

The introduction of the different operators $\calA_\calS$ leads to an analysis different from the one conducted in \cite{MalgouyresLandsberg_long}. Instead of considering the intersection of one linear space with a subset of $\Sigma_2$ (as in \cite{MalgouyresLandsberg_long}), we consider the intersection of many linear sets (the kernels of the operator $\calA_\calS$) with $\Sigma_1$.

The following property will turn out to be necessary and sufficient to guarantee the stable recovery property.
\begin{defi}{\bf Sparse-Deep-Null Space Property}

Let $\gamma\geq  1$ and $\rho>0$, we say that $\calA$ satisfies the {\em \SNSPlong (\SNSP)} with constants $(\gamma,\rho)$ for $\SF$ if and only if for all $\calS$ and $\calS'\in\SF$, any $T\in P(\wS_\calS) + P(\wS_{\calS'})$ satisfying $\|\calA_{\calS\cup\calS'} T\|\leq \rho$ and any $T'\in\KER{\calA_{\calS\cup\calS'}}$, we have
\begin{equation}\label{dsnsp}
\|T\| \leq \gamma \|T-\PROJ{\calS\cup\calS'} T'\|.
\end{equation}
\end{defi}
 Geometrically, the \SNSP does not hold when $\PROJ{\calS\cup\calS'} \KER{\calA_{\calS\cup\calS'}}$ intersects $P(\wS_\calS) + P(\wS_{\calS'})$ away from the origin or tangentially at $0$. It holds when the two sets intersect "transversally" at $0$.  Despite an apparent abstract nature, we will be able to characterize precisely when the lifting operator corresponding to a convolutional linear network satisfies the \SNSP (see Section \ref{conv-tree}). We will also be able to calculate the constants $(\gamma,\rho)$. 


\begin{prop}{\bf Sufficient condition for \SNSP}\label{suff_prop}

If $\KER{\calA}\cap \TS_{\calS\cup\calS'} = \{0\}$,  for all $\calS$ and $\calS'\in\SF$, then $\calA$ satisfies the \SNSP with constants $(\gamma,\rho)= (1,+\infty)$ for $\SF$.

\end{prop}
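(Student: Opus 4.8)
The plan is to show that the hypothesis $\KER{\calA}\cap \TS_{\calS\cup\calS'} = \{0\}$ forces the term $\PROJ{\calS\cup\calS'} T'$ appearing in \eqref{dsnsp} to vanish whenever it can contribute anything at all, so that the inequality $\|T\|\leq \gamma\|T-\PROJ{\calS\cup\calS'}T'\|$ holds with $\gamma=1$. First I would fix $\calS,\calS'\in\SF$, abbreviate $\calU=\calS\cup\calS'$, and take $T\in P(\hS_\calS)+P(\hS_{\calS'})$ with $\|\calA_{\calU}T\|\leq\rho$ (here $\rho=+\infty$, so this is no constraint) and $T'\in\KER{\calA_{\calU}}$. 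The key observation is that $\calA_{\calU}=\calA\PROJ{\calU}$ by \eqref{defAS}, so $T'\in\KER{\calA\PROJ{\calU}}$ means $\calA(\PROJ{\calU}T')=0$; since $\PROJ{\calU}T'\in\TS_{\calU}$ by definition of the projection, we get $\PROJ{\calU}T'\in\KER{\calA}\cap\TS_{\calU}=\{0\}$ by hypothesis. Hence $\PROJ{\calU}T'=0$, and then $\PROJ{\calS\cup\calS'}T'=\PROJ{\calU}T'=0$ as well.

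Once this is established the proof is immediate: the right-hand side of \eqref{dsnsp} reduces to $\gamma\|T-0\|=\gamma\|T\|$, which is at least $\|T\|$ for any $\gamma\geq 1$, so the inequality holds trivially (with equality when $\gamma=1$). I would then remark that the constraint $\|\calA_{\calU}T\|\leq\rho$ was never used, which is exactly why one may take $\rho=+\infty$. This covers the statement for the pair $(\gamma,\rho)=(1,+\infty)$, and since $\calS,\calS'$ were arbitrary in $\SF$ we conclude that $\calA$ satisfies the \SNSP.

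There is essentially no obstacle here; the only point requiring a line of care is the identification $\PROJ{\calU}\big(\KER{\calA_{\calU}}\big)\subset\KER{\calA}\cap\TS_{\calU}$, which follows from unwinding the definition \eqref{defAS} of $\calA_{\calU}$ together with the fact that $\PROJ{\calU}$ is idempotent with image $\TS_{\calU}$. Everything else is formal manipulation of norms.
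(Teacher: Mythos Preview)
Your proof is correct and follows essentially the same approach as the paper's own proof: both observe that $T'\in\KER{\calA_{\calS\cup\calS'}}$ forces $\PROJ{\calS\cup\calS'}T'\in\KER{\calA}\cap\TS_{\calS\cup\calS'}=\{0\}$, whence \eqref{dsnsp} holds trivially with $\gamma=1$ and no constraint on $\rho$. The only cosmetic difference is that you make the role of $T$ and of $\rho=+\infty$ more explicit, which is fine.
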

\proof
In order to prove the proposition, let us consider $\calS$ and $\calS'\in\SF$, $T'\in\KER{\calA_{\calS\cup\calS'}}$. We have $ \calA\PROJ{\calS\cup\calS'}T' = 0 $ and therefore $ \PROJ{\calS\cup\calS'}T' \in \KER{\calA}$. Moreover, by definition, $ \PROJ{\calS\cup\calS'}T' \in \TS_{\calS\cup\calS'} $. Therefore, applying the hypothesis of the proposition, we obtain $\PROJ{\calS\cup\calS'}T'=0$ and \eqref{dsnsp} holds for any $T$, when $\gamma=1$. Therefore, $\calA$ satisfies the \SNSP with constants $(\gamma,\rho)= (1,+\infty)$  for $\SF$.

\endproof
If $\iS \in \SF$, the condition becomes $\KER{\calA} = \{0\}$, which is sufficient but obviously not necessary for the \SNSP to hold. However, when $\SF$ truly imposes sparsity, the condition $\KER{\calA}\cap \TS_{\calS\cup\calS'} = \{0\}$ says that the elements of $\KER{\calA}$ shall not be sparse in some (tensorial) way. This nicely generalizes the case $H=1$.


\begin{defi}{\bf Deep-lower-RIP constant}\label{rip-def}

There exists a constant $\sigma_{\SF} > 0$ such that for any $\calS$ and $\calS'\in\SF$ and any $T$ in the orthogonal complement of $\KER{\calA_{\calS\cup\calS'}}$
\begin{equation}\label{dlrip}
\sigma_{\SF} \|\PROJ{\calS\cup\calS'}T\| \leq \|\calA_{\calS\cup\calS'}T\|.
\end{equation}

We call $\sigma_{\SF}$ a Deep-lower-RIP constant of $\calA$ with regard to $\SF$.
\end{defi}
\proof 
The existence of $\sigma_{\SF}$ is a straightforward consequence of the fact that
the restriction of $\calA_{\calS\cup\calS'}$ on the orthogonal complement of $\KER{\calA_{\calS\cup\calS'}}$ is injective. We therefore have for all $T$ in the orthogonal complement of $\KER{\calA_{\calS\cup\calS'}}$
\[\|\calA_{\calS\cup\calS'}T\|\geq \sigma_{\calS\cup\calS'} \|T\|\geq \sigma_{\calS\cup\calS'} \|\PROJ{\calS\cup\calS'}T\|,
\]
where $\sigma_{\calS\cup\calS'}>0$ is the smallest non-zero singular value of $\calA_{\calS\cup\calS'}$. The last inequality holds because $\PROJ{\calS\cup\calS'}$ is a contraction.

We obtain the existence of $\sigma_{\SF}$ by taking the minimum of the constants $\sigma_{\calS\cup\calS'}$ over the finite family of $\calS$ and $\calS'\in\SF$.
\endproof


\begin{thm}{\bf Sufficient condition of stable recovery for structured linear networks}\label{suf-stble-recovery-sparse}

Consider a structured linear network defined by $M_1$, \ldots, $M_H$, sparsity constraints defined by a family of possible supports $\SF$, data $X$ and $Y$ and the operator $\calA$ satisfying \eqref{lifting}.

Assume  $\calA$ satisfies the \SNSP with the constants $\gamma\geq 1$, $\rho>0$ for $\SF$. For any $\overline\calS\in\SF$, $\overline\wbf \in \wS_{\overline\calS}$ and $\overline\calS'\in\SF$, $\overline\wbf' \in \wS_{\overline\calS'}$ as in \eqref{model_estim-sparse} with $\eta+\delta\leq \rho$, we have
\[ \|P(\overline\wbf') - P(\overline\wbf)\|\leq \frac{\gamma }{ \sigma_{\SF}}~(\delta+\eta),
\]
where  $\sigma_{\SF}$ is the Deep-lower-RIP constant of $\calA$ with regard to $\SF$. 

Moreover, if $\frac{\gamma }{ \sigma_{\SF}}~(\delta+\eta)\leq \frac{1}{2}  ~ \max\left(\|P(\overline\wbf')\|_\infty,\|P(\overline\wbf)\|_\infty\right) $, then
\[d_p ( [\overline\wbf'] , [\overline\wbf] ) \leq 7 (HS)^{\frac{1}{p}}     \min\left( \|P(\overline\wbf)\|_\infty^{\frac{1}{H}-1}, \|P(\overline\wbf')\|_\infty^{\frac{1}{H}-1} \right) \frac{\gamma}{\sigma_{\SF}}(\delta+\eta).
\]
\end{thm}
\begin{proof}
Because $\calA_{{\overline\calS}'\cup\overline{\calS}}$ is linear and then because $\overline\wbf \in \wS_{\overline\calS}$ and $\overline\wbf' \in \wS_{\overline\calS'}$, using \eqref{ASSpP}, we have
\begin{eqnarray}
\|\calA_{{\overline\calS}'\cup\overline{\calS}} ( P({\overline\wbf}') - P(\overline\wbf) )\| & = &\|\calA_{{\overline\calS}'\cup\overline{\calS}} P({\overline\wbf}') - \calA_{{\overline\calS}'\cup\overline{\calS}}P(\overline\wbf) \| \nonumber \\
& = &\|\calA P({\overline\wbf}') - \calA P(\overline\wbf) \|  \nonumber\\
& \leq & \|\calA P({\overline\wbf}') - X \| + \|\calA P(\overline\wbf) -X\| \nonumber\\
& \leq &  \delta + \eta \label{ietviqvu}
\end{eqnarray}

If we further decompose (the decomposition is unique)
\begin{equation}\label{noeqruinobt}
P({\overline\wbf}') - P(\overline\wbf) = T + T',
\end{equation}
where $T'\in\KER{\calA_{{\overline\calS}'\cup\overline{\calS}}}$ and $T$ is orthogonal to $\KER{\calA_{{\overline\calS}'\cup\overline{\calS}}}$, we have
\[\|\calA_{{\overline\calS}'\cup\overline{\calS}} (P({\overline\wbf}') - P(\overline\wbf) )\| = \|\calA_{{\overline\calS}'\cup\overline{\calS}}  T\| \geq \sigma_{\SF} \|\PROJ{{\overline\calS}'\cup\overline{\calS}}T\| ,
\]
where $\sigma_{\SF}$ is the Deep-lower-RIP constant of $\calA$ with regard to $\SF$. Combining with \eqref{ietviqvu}, we get
\[\|\PROJ{{\overline\calS}'\cup\overline{\calS}}T\| \leq \frac{ \delta + \eta}{\sigma_{\SF}}.
\]
Combining this inequality with $\PROJ{{\overline\calS}'\cup\overline{\calS}}P({\overline\wbf}') = P({\overline\wbf}')$, $\PROJ{{\overline\calS}'\cup\overline{\calS}}P(\overline\wbf) = P(\overline\wbf)$ and \eqref{noeqruinobt}, we obtain
\begin{eqnarray*}
\|P({\overline\wbf}') - P(\overline\wbf) - \PROJ{{\overline\calS}'\cup\overline{\calS}}T' \| &=&\| \PROJ{{\overline\calS}'\cup\overline{\calS}}\left(P({\overline\wbf}') - P(\overline\wbf) -T'\right) \|\\ 
 & = & \|\PROJ{{\overline\calS}'\cup\overline{\calS}}T\| \\
 &\leq  & \frac{\delta+\eta}{\sigma_{\SF}}.
\end{eqnarray*}

Combining the latter inequality with the hypotheses:$\calA$ satisfies the \SNSP with constants $(\gamma,\rho)$ for $\SF$ and  $ \delta+\eta \leq \rho$; we have
\begin{eqnarray*} 
\|P({\overline\wbf}') - P(\overline\wbf)\| & \leq & \gamma\|P({\overline\wbf}') - P(\overline\wbf) - \PROJ{{\overline\calS}'\cup\overline{\calS}}T' \|  \\
&\leq & \gamma ~\frac{\delta+\eta}{\sigma_{\SF}}.
\end{eqnarray*}
When $\delta+\eta$ satisfy the condition in the theorem, we can apply Theorem \ref{rk1-Ident-thm} and obtain the last inequality.
\end{proof}

Theorem \ref{suf-stble-recovery-sparse} differs from the analogous theorem in \cite{MalgouyresLandsberg_long}. In particular, it is dedicated to sparsity constraints. The constant of the upper bound is different. We replace the smallest non-zero singular value of an operator by the min, over a finite number of linear space, of the smallest non-zero singular value of the restriction of the operator on the linear space (see Definition \ref{rip-def} and its proof). This is the usual role of the lower-RIP constant in compressed sensing \cite{elad_book}, hence the name Deep-lower-RIP.

One might again ask whether the condition \lq\lq$\calA$ satisfies the \SNSP\rq\rq\  is sharp or not. As stated in the following theorem, the answer is affirmative.

\begin{thm}{\bf Necessary condition for stable recovery for structured linear networks}\label{nec-stble-recovery-sparse}

Consider a structured linear network defined by $M_1$, \ldots, $M_H$, , sparsity constraints defined by a family of possible supports $\SF$, data $X$ and $Y$ and the operator $\calA$ satisfying \eqref{lifting}.

Assume the stability property holds: There exists $C$ and $\delta>0$ such that for any $\overline\calS\in\SF$ and any $\overline{\wbf}\in\wS_{\overline\calS}$, any $Y=\calA P(\overline{\wbf}) + e$, with $\|e\| \leq \delta$, and any ${\overline\calS}'\in\SF$ and ${\overline\wbf}'\in\wS_{{\overline\calS}'}$
such that
\[\|\calA P({\overline\wbf}') -Y \| \leq \|e\|
\]
we have
\[d_2 ( [{\overline\wbf}'] , [\overline\wbf] ) \leq C ~ \min\left( \|P(\overline\wbf)\|_\infty^{\frac{1}{H}-1}, \|P({\overline\wbf}')\|_\infty^{\frac{1}{H}-1} \right)  \|e\|.
\]

Then, $\calA$ satisfies the \SNSP with constants 
$$\gamma =  C S^{\frac{H-1}{2}} \sqrt{H}~\sigma_{max}~\qquad \mbox{ and } \qquad \rho = \delta,$$
for $\SF$, where $\sigma_{max}$ is the spectral radius of $\calA$.
\end{thm}
\begin{proof}
Let $\overline\calS$ and $\overline\calS'\in\SF$. Let $\overline{\wbf}\in\wS_{\overline\calS}$ and $\overline{\wbf}'\in\wS_{\overline\calS'}$ be such that $\|\calA\left(P(\overline{\wbf})- P(\overline{\wbf}')\right)\|\leq \delta$. We have, using  \eqref{ASSpP},
\[\calA\left(P(\overline{\wbf})- P(\overline{\wbf}')\right) = \calA_{\overline\calS \cup \overline\calS'}\left(P(\overline{\wbf})- P(\overline{\wbf}')\right).
\]
Throughout the proof, we also consider $T'\in\KER{\calA_{\overline\calS\cup\overline\calS'}}$. We assume that $\|P(\overline{\wbf})\|_\infty \leq \|P(\overline{\wbf}')\|_\infty$. When it is not the case, the proof is analogue. We denote
\[Y=\calA P(\overline{\wbf}')\qquad\mbox{ and } \qquad e=\calA P(\overline{\wbf}')-\calA P(\overline{\wbf}).
\]
We have $Y=\calA P(\overline{\wbf})+e$ with $\|e\|\leq \delta$. Moreover, since
$\overline{\wbf}\in\wS_{\overline\calS}$, $\|e\| \leq \delta$ and since we obviously have $\|\calA P({\overline\wbf}') -Y \| \leq \|e\|$, the assumption that the stability property holds guaranties
\[d_2([\overline\wbf'] , [\overline\wbf])\leq  C \|P(\overline\wbf')\|_\infty^{\frac{1}{H}-1} \|e\|.
\]
Using \eqref{ASSpP} and the fact that $e=\calA_{\overline\calS\cup\overline\calS'} (P(\overline{\wbf})- P(\overline{\wbf}') )$, for any  $T'\in\KER{\calA_{\overline\calS\cup\overline\calS'}}$
\begin{eqnarray*}
\| e \| & = & \| \calA_{\overline\calS\cup\overline\calS'} ( P(\overline{\wbf})- P(\overline{\wbf}') - T' )\|, \\
 & \leq & \sigma_{max} \| \PROJ{\overline\calS\cup\overline\calS'}( P(\overline{\wbf})- P(\overline{\wbf}') - T') \|, \\
 & = &\sigma_{max} \|  P(\overline{\wbf})- P(\overline{\wbf}') - \PROJ{\overline\calS\cup\overline\calS'} T' \|,
\end{eqnarray*}
where $\sigma_{max}$ is the spectral radius of $\calA$. Therefore,
\[d_2([\overline\wbf'] , [\overline\wbf])\leq C \|P(\overline\wbf')\|_\infty^{\frac{1}{H}-1} ~\sigma_{max}~ \|P(\overline{\wbf})- P(\overline{\wbf}') - \PROJ{\overline\calS\cup\overline\calS'}T'  \|,
\]

Finally, using Theorem \ref{PLip-thm} and the fact that $\|P(\overline{\wbf})\|_\infty \leq \|P(\overline{\wbf}')\|_\infty$, we obtain
\begin{eqnarray*}
\|P(\overline\wbf')-P(\overline\wbf)\| & \leq & S^{\frac{H-1}{2}} H^{1-\frac{1}{2}}\|P(\overline\wbf')\|_{\infty}^{1-\frac{1}{H}} d_2([\overline\wbf'], [\overline\wbf]) \\
 & \leq & C S^{\frac{H-1}{2}} \sqrt{H} ~\sigma_{max}~\|P(\overline{\wbf})- P(\overline{\wbf}') - \PROJ{\overline\calS\cup\overline\calS'}T'  \| \\
 & = & \gamma \|P(\overline{\wbf})- P(\overline{\wbf}') - \PROJ{\overline\calS\cup\overline\calS'}T'  \|
\end{eqnarray*}
for $\gamma = C S^{\frac{H-1}{2}} \sqrt{H}~\sigma_{max}~$.

Summarizing, we conclude that under the hypothesis of the theorem: For any $\overline\calS$ and $\overline\calS'\in\SF$ and  any $T\in P(\wS_{\overline\calS}) + P(\wS_{\overline\calS'}) $ (above $P(\overline{\wbf})- P(\overline{\wbf}')$ has the role of $T$) such that $\|\calA T\|=\|\calA_{\overline\calS\cup\overline\calS'} T\|\leq \delta$, we have for any $T'\in\KER{\calA_{\overline\calS\cup\overline\calS'}}$
\[ \|T\|\leq \gamma \| T - \PROJ{\overline\calS\cup\overline\calS'}T' \|.
\]
In words, $\calA$ satisfies the \SNSP for $\SF$ with the constants of Theorem \ref{nec-stble-recovery-sparse}.
\end{proof}

\section{Application to convolutional linear network under sparsity prior}\label{conv-tree}


\begin{figure}
\centering{
\begin{picture}(15,10)
\put(2.5,9){\shortstack{{\scriptsize input layer}}}
\put(8,9){\shortstack{{\scriptsize \green{hidden layers}}}}
\put(14,9){\shortstack{{\scriptsize \red{output layer}}}}


\put(4,2){\circle*{0.5}}
\put(4,2){\line(1,0.25){4}}
\put(4,4){\circle*{0.5}}
\put(4,4){\line(1,-0.25){4}}
\put(4,6){\circle*{0.5}}
\put(4,6){\line(1,0.25){4}}
\put(4,8){\circle*{0.5}}
\put(4,8){\line(1,-0.25){4}}


\put(8,3){\line(1,0){4}}
\put(8,3){\line(1,1){4}}
\put(8,3){\green{{\circle*{0.5}}}}

\put(8,7){\line(1,0){4}}
\put(8,7){\line(1,-1){4}}
\put(8,7){\green{{\circle*{0.5}}}}
\put(12,3){\line(1,0.5){4}}
\put(12,3){\green{{\circle*{0.5}}}}

\put(12,7){\line(1,-0.5){4}}
\put(12,7){\green{{\circle*{0.5}}}}


\put(16,5){\red{{\circle*{0.5}}}}

\end{picture}
}

\caption{\label{network}Example of a convolutional linear network. To every edge is attached a convolution kernel. The network does not involve non-linearities or sampling.}
\end{figure}
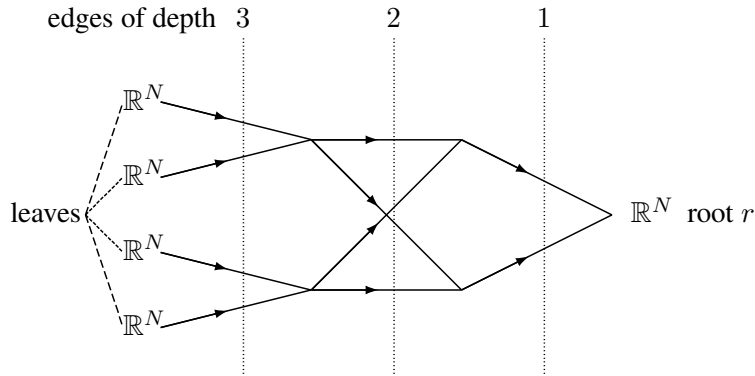


We consider a sparse convolutional linear network as depicted in Figure \ref{network}. Formally, the considered convolutional linear network is defined from a rooted directed acyclic graph $\tree(\edges,\nodes)$ composed of nodes $\nodes$ and edges $\edges$. Each edge connects two nodes. The root of the graph is denoted by $r$ (it contains the output signal) and the set containing all its leaves is denoted by $\leaves$ (the leaves contain the input signal). We denote by $\PP$ the set of all paths connecting the leaves and the root. We assume, without loss of generality, that the length of any path between a leaf and the root is independent of the considered path and equal to $H\geq 0$. We also assume that, for any edge $e\in\edges$, the length of the paths separating $e$ and any leaf is constant. This length is called the depth of $e$. For any $h=1..H$, we denote the set containing all the edges of depth $h$, by $\edges(h)$.

Moreover, to any edge $e$ is attached a convolution kernel of maximal support $\calS_e\subset \N{N}$. We assume (without loss of generality) that $\sum_{e\in\edges(h)} |\calS_e|$ is independent of $h$ ($|\calS_e|$ denotes the cardinality of $\calS_e$). We take
\[S= \sum_{e\in\edges(1)} |\calS_e|.
\]
For any edge $e$, we consider the mapping $\calT_e: \RR^S \longrightarrow \RR^N$ that maps any $w\in\RR^S$ into the convolution kernel $\calT_e(w)\in\RR^N$, attached to the edge $e$, whose support is $\calS_e$. As in the previous section, we assume a sparsity constraint and will only consider a family $\Mod{}$ of possible supports $\calS \subset \iS$.

At each $h$, the convolutional linear network computes, for all $e\in\edges(h)$,  the convolution between the signal at the origin of $e$; then, it attaches to any ending node the sum of all the convolutions arriving at that node. Examples of such convolutional linear networks includes wavelets, wavelet packets \cite{Mallatbook} or the fast transforms optimized in \cite{FTL_IJCV,FTO}. It is the usual convolutional neural network, without bias, in which the activation function is the identity and the supports are potentially scattered and not fixed. It is clear that the operation performed between any pair of consecutive layers depends linearly on parameters $w\in\RR^S$. The convolutional linear network therefore depends on parameters $\wbf\in\wS$ and its prediction takes the form
\[f_\wbf(x) = M_H(\wbf_H)\cdots M_1(\wbf_1) x^{|\leaves|}\qquad \mbox{, for all }x\in\RR^N
\]
where the operators $M_h$ satisfy the hypothesis of the present paper and $ x^{|\leaves|} = \Idl x$ where $\Idl$ concatenates vertically $|\leaves|$ identity matrix of size $N\times N$:
\begin{equation}\label{Idl_def}
\Idl=\left(\begin{array}{c} \Id \\ \vdots \\ \Id\end{array}\right) \in \RR^{|\leaves| N \times N}.
\end{equation} 
Given a sample $(x_i,y_i)_{i=1..n} \in(\RR^N\times \RR^N)^n$ and reminding that $X$ is the horizontal concatenation of the column vectors $x_i$, we also denote $X^{|\leaves|} = \Idl X \in\RR^{N|\leaves| \times n}$.

Given $X^{|\leaves|}$ and a network architecture, this section applies the results of the preceding sections in order to  identify sharp conditions guaranteeing that, for  any supports $\overline\calS$ and  $\overline\calS'\in\Mod{}$, any parameters  $\overline{\wbf}$ and $\overline{\wbf}'\in\wS$ satisfying $\SUPP{\overline\wbf} \subset \overline\calS$ and  $\SUPP{\overline\wbf}' \subset \overline\calS'$, and such that
\[\| M_H(\overline\wbf_H)\cdots M_1(\overline\wbf_1) X^{|\leaves|} - Y\| = \delta \qquad \mbox{and}\qquad\| M_H(\overline\wbf'_H)\cdots M_1(\overline\wbf'_1) X^{|\leaves|} - Y\| = \eta
\]
are small enough, we can guarantee that $\overline\wbf$ and $\overline\wbf'$ are close to each other.

In order to do so, we first establish a few simple properties and define relevant notations. Notice first that, we can apply the convolutional linear network to any input $u\in\codeset$, where $u$ is the (vertical) concatenation of the signals $u^f\in\RR^N$ for $f\in\leaves$. Therefore, $M_H(\wbf_H)\cdots M_1(\wbf_1)$ is the (horizontal) concatenation of $|\leaves|$ matrices $Z^f\in\RR^{N\times N}$ such that
\begin{equation}\label{avecZ}
M_H(\wbf_H)\cdots M_1(\wbf_1) u = \sum_{f\in\leaves} Z^fu^f\qquad\mbox{, for all }u\in\codeset.
\end{equation}
Let us consider the convolutional linear network defined by $\wbf\in\wS$ as well as $f\in\leaves$ and $n=1..N$. The column of $M_H(\wbf_H)\cdots M_1(\wbf_1)$ corresponding to  the leaf $f$ and the entry $n$  is the translation by $n$ of
\begin{equation}\label{multiconv}
\sum_{\pbf\in\PP(f)} \multiconv{\pbf}{\wbf}
\end{equation}
where $\PP(f)$ contains all the paths of $\PP$ starting from the leaf $f$ and
\[\multiconv{\pbf}{\wbf} = \calT_{e^H}(\wbf_H)*\ldots*\calT_{e^1}(\wbf_1) \qquad\mbox{, where }\pbf=(e^1,\ldots,e^H)
\] 
and we remind that $\calT_{e^h}(\wbf_h)$ is the convolution kernel on the edge $e^h$.

We define for any $h=1..H$ the mapping $\ebf_h : \N{S} \longrightarrow \edges(h)$ which provides for any $i=1..S$ the unique edge of $\edges(h)$ such that the $i^{\mbox{th}}$ entry of $w\in\RR^S$ contributes to $\calT_{\ebf_h(i)}(w)$. Also, for any $\ibf\in\N{S}^H$, we denote $\pbf_\ibf=(\ebf_1(\ibf_1), \ldots, \ebf_H(\ibf_H))$ and, for any $\calS\in\Mod{}$,
\[\Ibf_\calS = \left\{\ibf \in\iS | \ibf \in \calS\mbox{ and }\pbf_\ibf \in \PP \right\}.
\] 
The latter contains all the indices of $\calS$ corresponding to a valid path in the network. For any set of parameters $\wbf\in\wS$ and any path $\pbf\in\PP$, we also denote by $\wbf^\pbf$  the restriction of $\wbf$ to its indices contributing to the kernels on the path $\pbf$.  We also define, for any $\ibf\in\iS$, $\wbf^\ibf\in\wS$ by 
\begin{equation}\label{hibf}
\wbf^\ibf_{h,j} = \left\{\begin{array}{ll}
1 & \mbox{, if } j =\ibf_h \\
0 & \mbox{otherwise}
\end{array}\right. \qquad\mbox{, for all }h=1..H\mbox{ and }  j=1..S
\end{equation}
so-that $P(\wbf^\ibf)$ is a Dirac at position $\ibf$. The difference between $\wbf^\pbf$ and $\wbf^\ibf$ will not be ambiguous, once in context.

We can deduce from \eqref{multiconv} that, when $\ibf\in\Ibf_\calS$,  $M_H(\wbf^\ibf_H)\cdots M_1(\wbf^\ibf_1)$ simply convolves the entries at one leaf with a Dirac delta function. Therefore, all  the entries of $M_H(\wbf^\ibf_H)\cdots M_1(\wbf^\ibf_1)$ are in $\{0,1\}$ and we denote $\calD_\ibf = \{(i,j)\in\N{N}\times\N{N|\leaves|} | \big(M_H(\wbf^\ibf_H)\cdots M_1(\wbf^\ibf_1)\big)_{i,j} = 1\}$. 

We also denote $\one\in\RR^S$   a vector of size $S$ with all its entries equal to $1$. For any edge $e\in\edges$, $\one^e\in\RR^S$ consists of zeroes except for the entries contributing to the convolution kernel on the edge $e$ which are equal to $1$. For any $\calS\subset \iS$, we define $\one^\calS\in\wS$ which consists of zeroes except for the entries corresponding to the indexes in $\calS$  which are equal to $1$.

The equivalence relationship $\sim$, defined in Section \ref{notation-sec}, does not suffice to group parameters leading to the same network prediction. Indeed, with the considered convolutional networks, we can rescale the kernels on different path differently.
Therefore, we say that two networks sharing the same architecture and defined by the parameters $\wbf$ and $\wbf'\in\wS$ are equivalent if and only if
\[\forall \pbf\in\PP, \exists (\lambda_e)_{e\in\pbf} \in\RR^\pbf\mbox{, such that } \prod_{e\in\pbf} \lambda_e = 1 \mbox{ and } \forall e\in\pbf, \calT_e(\wbf') = \lambda_e \calT_e(\wbf). 
\]
The equivalence class of $\wbf\in\wS$ is denoted by $\class{\wbf}$. It is not difficult to see that the prediction of the networks defined by equivalent parameters are identical.  For any $p\in[1,+\infty[$, we define
\begin{equation}\label{Delta_def}
\cald_p (\class{\wbf}, \class{\wbf'}) = \Big(\sum_{\pbf\in\PP} d_p\big([\wbf^\pbf],[{\wbf'}^\pbf]\big)^p  \Big)^{\frac{1}{p}},
\end{equation}
where we remind that $d_p$ is defined in \eqref{dp_def}. Since $d_p$ is a metric, $\cald_p$ is a metric between network classes.

The equivalence classes we have defined do not take into the account the fact it is possible to modify $\wbf$ in a way that corresponds to permutation of the nodes of the network. Taking into account this invariant is difficult and remains an open question. It has not been addressed in \cite{arora2014provable,brutzkus2017globally,li2017convergence,sedghi2014provable,zhong2017recovery,MalgouyresLandsbergITW,MalgouyresLandsberg_long}.

Finally, we remind that because of \eqref{lifting}, there exists a unique mapping 
\[\calA:\TS\longrightarrow \RR^{N\times n}\] 
such that 
\[\calA P(\wbf) = M_H(\wbf_H)\cdots M_1(\wbf_1)X^{|\leaves|}\qquad \mbox{, for all } \wbf\in\wS,
\]
where $P$ is the Segre embedding defined in \eqref{defP}. 

\begin{prop}{\bf Necessary condition of identifiability of a sparse network}\label{nec-ident-network}

Only one of the two following alternatives can occur.
\begin{enumerate}
\item \label{tbhhteb} Either, there exist $\calS$ and $\calS'\in\Mod{}$ such that some entries of $M_H(\one_H^{\calS\cup\calS'})\cdots M_1(\one_1^{\calS\cup\calS'})\Idl$ do not belong to $\{0,1\}$. 

When this holds, $\class{\overline \wbf}$ is not always identifiable: there exists $\{\overline\wbf\} \neq \{\overline\wbf'\}$ such that  
\[ M_H(\overline\wbf_H)\cdots M_1(\overline\wbf_1) \Idl=  M_H(\overline\wbf'_H)\cdots M_1(\overline\wbf'_1)\Idl.
\]
\item \label{eohur} Or, for any $\calS$ and $\calS'\in\Mod{}$, all the entries of $M_H(\one_H^{\calS\cup\calS'})\cdots M_1(\one_1^{\calS\cup\calS'})\Idl$ belong to $\{0,1\}$. When this holds :
\begin{enumerate}
\item \label{second_item} For any $\calS$ and $\calS'\in\Mod{}$, for any distinct $\ibf\in\calS$ and $\ibf'\in\calS'$, we have $\calD_\ibf \cap \calD_{\ibf'}  = \emptyset$. 
\item \label{premier_item} For any $\calS$ and $\calS'\in\Mod{}$, for any $\wbf\in\wS_{\calS}$ and  $\wbf'\in\wS_{\calS'}$ and any distinct $\pbf$ and $\pbf'\in\PP$, we have 
\[\supp \Big( M_H(\wbf_H^{\pbf})\cdots M_1(\wbf_1^{\pbf}) \Idl\Big) \bigcap \SUPP{ M_H((\wbf')_H^{\pbf'})\cdots M_1((\wbf')_1^{\pbf'}) \Idl} = \emptyset. 
\]
\item \label{epruhu} If moreover $|\PP|=1$ and $X$ is full row rank: 
\[\KER{\calA_{\calS\cup\calS'}} = \{T\in\TS | \forall \ibf\in\Ibf_{\calS\cup\calS'}, T_\ibf = 0\}.
\]
\end{enumerate}
\end{enumerate}

\end{prop}

The proof is in Appendix \ref{nec-ident-network-proof}.

Proposition  \ref{nec-ident-network}, Item \ref{tbhhteb}, expresses a necessary condition of stability: for any $\calS$ and $\calS'\in\Mod{}$, all the entries of $M_H(\one_H^{\calS\cup\calS'})\cdots M_1(\one_1^{\calS\cup\calS'})\Idl$ belong to $\{0,1\}$. The condition is restrictive but not empty. We will see in the sequel that, when $X$ is full row rank, the condition is sufficient to guarantee the stability. Notice that the condition can be computed at a low cost by applying the network to Dirac delta functions, when $|\Mod{}|$ is not too large.

\begin{prop}\label{network-cor}
If $|\PP|=1$ and $X$ is full row rank. If, for any $\calS$ and $\calS'\in\SF$, all the entries of $M_H(\one_H^{\calS\cup\calS'})\cdots M_1(\one_1^{\calS\cup\calS'})M_0$ belong to $\{0,1\}$, then $\KER{\calA_{\calS\cup\calS'}}$ is the orthogonal complement of $\TS_{\calS\cup\calS'}$ and  $\calA$ satisfies the \SNSP with constants $(\gamma,\rho)=(1,+\infty)$ for $\SF$. Moreover,  $\sigma_{\SF} = \sqrt{N} \sigma_{min}(X)$, where $ \sigma_{min}(X)$ is the smallest singular value of $X$, is a deep-lower-RIP constant of $\calA$ with regard to $\SF$.
\end{prop}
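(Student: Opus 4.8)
\emph{Overall approach.} I would derive the whole statement from Proposition~\ref{nec-ident-network} together with the rigidity forced by $|\PP|=1$, and then simply read off the singular values of $\calA_{\calS\cup\calS'}$ on a canonical orthonormal basis. If $\PP=\{\pbf\}$ has a single element, then (discarding edges lying on no leaf--root path) $\tree$ is a single chain with one leaf and exactly one edge $e^k$ in each layer $k$; hence $\ebf_k(i)=e^k$ for all $i=1..S$, so $\pbf_\ibf=\pbf\in\PP$ for every $\ibf\in\iS$. Consequently, for $\calS,\calS'\in\SF$ one has $\Ibf_{\calS\cup\calS'}=\{\ibf\in\iS\mid \ibf\in\calS\cup\calS'\}$, which is exactly the index set carrying $\TS_{\calS\cup\calS'}$. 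The hypothesis on $M_1(\one^{\calS\cup\calS'})\cdots M_K(\one^{\calS\cup\calS'})$ is precisely the favourable case of Proposition~\ref{nec-ident-network}, so its item~2 gives $\KER{\calA_{\calS\cup\calS'}}=\{T\in\TS\mid T_\ibf=0\text{ for all }\ibf\in\Ibf_{\calS\cup\calS'}\}$. Since the tensors $(P(\hbf^\ibf))_{\ibf\in\iS}$ of \eqref{hibf} are the canonical orthonormal basis of $\TS$, this says $\KER{\calA_{\calS\cup\calS'}}$ is spanned by the $P(\hbf^\ibf)$ with $\ibf\notin\Ibf_{\calS\cup\calS'}$ while $\TS_{\calS\cup\calS'}$ is spanned by those with $\ibf\in\Ibf_{\calS\cup\calS'}$; the two are orthogonal complements of each other, which is the first assertion.

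\emph{The \SNSP.} For $T\in\TS_{\calS\cup\calS'}$ we have $\PROJ{\calS\cup\calS'}T=T$, hence $\calA_{\calS\cup\calS'}T=\calA T$, so such a $T$ lies in $\KER{\calA}$ iff it lies in $\KER{\calA_{\calS\cup\calS'}}$; combined with the previous step this gives $\KER{\calA}\cap\TS_{\calS\cup\calS'}=\KER{\calA_{\calS\cup\calS'}}\cap\TS_{\calS\cup\calS'}=\{0\}$ for all $\calS,\calS'\in\SF$. Proposition~\ref{suff_prop} then immediately gives the \SNSP with $(\gamma,\rho)=(1,+\infty)$. (Equivalently: $P(\hS_\calS)+P(\hS_{\calS'})\subset\TS_{\calS\cup\calS'}$, and any $T'\in\KER{\calA_{\calS\cup\calS'}}=\TS_{\calS\cup\calS'}^\perp$ has $\PROJ{\calS\cup\calS'}T'=0$, so \eqref{dsnsp} holds with equality and with no restriction on $\|\calA_{\calS\cup\calS'}T\|$.)

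\emph{The deep-lower-RIP constant.} The orthogonal complement of $\KER{\calA_{\calS\cup\calS'}}$ is $\TS_{\calS\cup\calS'}$, on which $\PROJ{\calS\cup\calS'}$ is the identity; since $\sigma_{\SF}$ is the minimum over the finite family $\SF$ of the smallest nonzero singular values $\sigma_{\calS\cup\calS'}$ of $\calA_{\calS\cup\calS'}$, it suffices to prove $\sigma_{\calS\cup\calS'}=\sqrt N$. I would evaluate $\calA$ on the orthonormal basis $(P(\hbf^\ibf))_{\ibf\in\Ibf_{\calS\cup\calS'}}$ of $\TS_{\calS\cup\calS'}$: by \eqref{multiconv}, the $n$-th column of $\calA P(\hbf^\ibf)$ is the $n$-translate of $\calT_{e^1}(\hbf^\ibf_1)*\cdots*\calT_{e^K}(\hbf^\ibf_K)$, a circular convolution of $K$ unit diracs and hence a single unit dirac; letting $n$ range over $\NN_N$ thus makes $\calA P(\hbf^\ibf)$ an $N\times N$ cyclic-shift (permutation) matrix, so $\|\calA P(\hbf^\ibf)\|^2=|\calD_\ibf|=N$. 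Moreover $\calD_\ibf\cap\calD_\jbf=\emptyset$ for distinct $\ibf,\jbf\in\Ibf_{\calS\cup\calS'}$ (the disjointness established inside the proof of Proposition~\ref{nec-ident-network}), so $\langle\calA P(\hbf^\ibf),\calA P(\hbf^\jbf)\rangle=0$. Hence $\tfrac{1}{\sqrt N}\calA_{\calS\cup\calS'}$ restricted to $\TS_{\calS\cup\calS'}$ is an isometry onto its image, every nonzero singular value of $\calA_{\calS\cup\calS'}$ equals $\sqrt N$, and therefore $\sigma_{\SF}=\sqrt N$.

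\emph{Main obstacle.} Given Proposition~\ref{nec-ident-network}, nothing here is genuinely hard; the one point deserving care is the identity $|\calD_\ibf|=N$, i.e.\ that in a single-chain convolutional network $\calA P(\hbf^\ibf)$ has exactly one unit entry per column (a convolution of unit diracs is a unit dirac and the column index $n$ genuinely shifts it). This is exactly what upgrades the lower-RIP bound from ``$\geq 1$'' to the exact value $\sqrt N$.
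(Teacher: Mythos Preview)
Your proof is correct and follows essentially the same approach as the paper: you invoke Proposition~\ref{nec-ident-network} together with the observation that $|\PP|=1$ forces $\Ibf_{\calS\cup\calS'}=\calS\cup\calS'$, deduce that $\KER{\calA_{\calS\cup\calS'}}$ is the orthogonal complement of $\TS_{\calS\cup\calS'}$, and then compute $\|\calA T\|^2=N\|T\|^2$ on that complement using the disjointness of the $\calD_\ibf$ and $|\calD_\ibf|=N$. The only cosmetic difference is that for the \SNSP you route through Proposition~\ref{suff_prop}, whereas the paper argues directly that $\PROJ{\calS\cup\calS'}T'=0$ for $T'\in\KER{\calA_{\calS\cup\calS'}}$; you yourself note this direct argument in your parenthetical remark, so the two proofs are effectively identical.
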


The proof of the proposition is in Appendix \ref{network-cor-proof}.

Let us remind: we consider $X$ and $Y\in\RR^{N\times n}$, $\overline{\calS}$ and $\overline{\calS}'\in\Mod{}$ and parameters $\overline \wbf$ and $\overline \wbf' \in\wS$ satisfying  
\begin{equation}\label{etibte}
\SUPP{\overline{\wbf}} \subset \overline{\calS} \qquad \mbox{and}\qquad \SUPP{\overline{\wbf}'} \subset \overline{\calS}'
\end{equation}
and denote
\begin{equation}\label{iuhqribbrv} 
\delta =\| M_H(\overline\wbf_H)\cdots M_1(\overline\wbf_1)X^{|\leaves|} - Y\| \qquad\mbox{and}\qquad  \eta = \| M_H(\overline\wbf'_H)\cdots M_1(\overline\wbf'_1)X^{|\leaves|} - Y\|, 
\end{equation}
where we will assume in the theorem that $\delta$ and $\eta$ are small.

 For any path $\pbf \in\PP$, we denote 
\[\delta^\pbf = \| M_H(\overline\wbf^\pbf_H)\cdots M_1(\overline\wbf^\pbf_1) - M_H((\overline\wbf')^\pbf_H)\cdots M_1((\overline\wbf')^\pbf_1) \|
\]
where we remind that $\overline\wbf^\pbf$ (resp ${\overline\wbf'}^\pbf$) denotes the restriction of $\overline\wbf$ (resp $\overline\wbf'$) to the path $\pbf$. Under the hypothesis of the following theorem, when $\delta + \eta$ is small, we will prove that $\delta^\pbf$ is small too for every $\pbf\in\PP$ (see \eqref{oreibtbtsg}).

\begin{thm}{\bf Sufficient condition of stability}\label{stabl-rec-network-thm}

Let $X$, $Y$, $\overline\calS$, $\overline\calS'$, $\overline\wbf$, $\overline\wbf'$, $\delta$ and $\eta$ be as described above (see \eqref{etibte} and \eqref{iuhqribbrv}). Assume $X$ is full row rank.

If for any $\calS$ and $\calS'\in\SF$, all the entries of $M_H(\one_H^{\calS\cup\calS'})\cdots M_1(\one_1^{\calS\cup\calS'})\Idl$ belong to $\{0,1\}$ and if there exists $\varepsilon >0$ such that for all $e\in\edges$, $\|\calT_e(\overline\wbf)\|_\infty\geq \varepsilon$ and for all $\pbf\in\PP$, $\frac{\delta^\pbf}{\sqrt{N} \sigma_{min}(X)}\leq \frac{1}{2} \max(\|P(\overline \wbf^\pbf)\|_\infty,\|P((\overline \wbf')^\pbf)\|_\infty)$, then $\overline \wbf$ and $\overline \wbf'$ are close to each other: for any $p\in [1,\infty[$
\[\cald_p (\class{{\overline\wbf}'}, \class{\overline\wbf}) \leq 7 \frac{(HS)^{\frac{1}{p}}}{\sqrt{N} \sigma_{min}(X)^2 \varepsilon^{H-1}} ~ (\delta+\eta).
\]
\end{thm}
We remind that, according to Proposition \ref{nec-ident-network}, Item \ref{tbhhteb},  the network is not identifiable when some entries of $M_H(\one_H^{\calS\cup\calS'})\cdots M_1(\one_1^{\calS\cup\calS'})\Idl$ do not belong to $\{0,1\}$. 

The proof of the theorem is in Appendix \ref{stabl-rec-network-thm-proof}.

\section{Conclusion}
We provide a necessary and sufficient condition of stability for the optimal weights of a sparse linear network. In the general setting, when no assumption is made on the architecture of the network, the stability constant $C$ is improved when compared to un-specified weight models \cite{MalgouyresLandsberg_long}. The gain is comparable to the gain obtained in compressed sensing when replacing the smallest singular value by the lower RIP constant \cite{elad_book}. We then specialize the results to sparse convolutional linear networks. In this analyses, we detail the stability condition in terms of a condition on the architecture and a condition on the sample inputs. The condition on the architecture is restrictive but not empty. The condition on the sample inputs is rather weak and basically requires to have as many (diverse) samples as the dimension of the input space. The constant $\sigma_{min}(X)$ is a key component of the stability constant.

\acks{Francois Malgouyres is funded by the project DEEL (\url{https://www.deel.ai/}) and by the ANITI (\url{https://aniti.univ-toulouse.fr/index.php/en/}). The author would like to thank Joseph Landsberg for all his remarks.}

\bibliography{ref}

\appendix

\section{Proof of Proposition \ref{nec-ident-network}}\label{nec-ident-network-proof}

First notice that the entries of $M_H(\one_H^{\calS\cup\calS'})\cdots M_1(\one_1^{\calS\cup\calS'})$ are non-negative integers.

{\em Let us first assume that: } There exist $\calS$ and $\calS'\in\Mod{}$ and an entry of \[M_H(\one_H^{\calS\cup\calS'})\cdots M_1(\one_1^{\calS\cup\calS'})\Idl\] that does not belong to $\{0,1\}$.

Using \eqref{Idl_def}, \eqref{avecZ} and \eqref{multiconv}, we know that there is $n=1..N$ such that
\[\sum_{f\in\leaves} \sum_{\pbf\in\PP(f)} \multiconv{\pbf}{\one^{\calS\cup\calS'}}_n \geq 2.
\]
As a consequence, there is $\ibf$ and $\jbf\in{\calS\cup\calS'}$ with $\ibf\neq \jbf$ and
\[\multiconv{\pbf_\ibf}{\wbf^\ibf}_n = \multiconv{\pbf_\jbf}{\wbf^\jbf}_n = 1.
\]
Therefore, since both $\multiconv{\pbf_\ibf}{\wbf^\ibf}$ and $\multiconv{\pbf_\jbf}{\wbf^\jbf}$ are Diracs,
\[M_H(\wbf^\ibf_H)\cdots M_1(\wbf^\ibf_1) \Idl= M_H(\wbf^\jbf_H)\cdots M_1(\wbf^\jbf_1)\Idl.
\]
Since $\ibf\neq\jbf$, $\{\wbf^\ibf\} \neq \{\wbf^\jbf\}$ and the network is not identifiable. This proves Item \ref{tbhhteb}.

{\em Let us now assume that: } For any $\calS$ and $\calS'\in\SF$, all the entries of \[M_H(\one_H^{\calS\cup\calS'})\cdots M_1(\one_1^{\calS\cup\calS'})\Idl\] belong to $\{0,1\}$.

For any $\calS$ and $\calS'\in\SF$ and any distinct $\ibf\in\calS$ and $\ibf'\in\calS'$,  since $\multiconv{\pbf}{\wbf^\ibf}$ and $\multiconv{\pbf}{\wbf^{\ibf'}}$ are Diracs, using \eqref{multiconv}, \eqref{avecZ} and the hypothesis we establish Item \ref{second_item}.

To prove Item \ref{premier_item}, we consider $\calS$ and $\calS'\in\Mod{}$, $\wbf\in\wS_{\calS}$ and  $\wbf'\in\wS_{\calS'}$, and  distinct $\pbf \neq\pbf'\in\PP$. We have
\[\SUPP{M_H(\wbf_H^{\pbf})\cdots M_1(\wbf_1^{\pbf}) \Idl} \subset \SUPP{M_H((\one^{\calS\cup\calS'})_H^{\pbf})\cdots M_1((\one^{\calS\cup\calS'})_1^{\pbf})\Idl}
\]
and
\[\SUPP{M_H((\wbf')_H^{\pbf'})\cdots M_1((\wbf')_1^{\pbf'})\Idl} \subset \SUPP{M_H((\one^{\calS\cup\calS'})_H^{\pbf'})\cdots M_1((\one^{\calS\cup\calS'})_1^{\pbf'})\Idl}.
\]
Using the hypothesis, we know (as in the proof of Item \ref{second_item}) that
\begin{multline*}
\SUPP{M_H((\one^{\calS\cup\calS'})_H^{\pbf})\cdots M_1((\one^{\calS\cup\calS'})_1^{\pbf}) \Idl} \\
\bigcap 
\SUPP{M_H((\one^{\calS\cup\calS'})_H^{\pbf'})\cdots M_1((\one^{\calS\cup\calS'})_1^{\pbf'})\Idl} = \emptyset
\end{multline*}
and conclude that Item \ref{premier_item} holds.

To prove the Item \ref{epruhu}, notice first that $(P(\wbf^\ibf))_{\ibf\not\in\Ibf_{\calS\cup\calS'}}$ forms a basis of $\{T\in\TS | \forall \ibf\in\Ibf_{\calS\cup\calS'}, T_\ibf = 0\}$. We check using \eqref{multiconv} and \eqref{defAS} that, for any $\ibf\not\in\Ibf_{\calS\cup\calS'}$, 
\[\calA_{\calS\cup\calS'} P(\wbf^\ibf) = \left\{\begin{array}{ll} 
\calA 0 = 0 & \mbox{, if } \ibf\not\in{\calS\cup\calS'}\\
M_H(\wbf^\ibf_H)\cdots M_1(\wbf^\ibf_1) X^{|\leaves|} = 0 & \mbox{, if } \ibf\in{\calS\cup\calS'} \mbox{ and }\pbf_\ibf \not\in\PP.
\end{array}\right.
\]
As a consequence,  
\begin{equation}\label{reitb}
\{T\in\TS | \forall \ibf\in\Ibf_{\calS\cup\calS'}, T_\ibf = 0\} \subset \KER{\calA_{\calS\cup\calS'}}.
\end{equation}

To prove the converse inclusion, we observe that
\begin{eqnarray*}
\RH{\calA_{\calS\cup\calS'}} & = & \DIM{ \SPAN{\calA_{\calS\cup\calS'} P(\wbf^\ibf) | \ibf \in \Ibf_{\calS\cup\calS'}} } \\
 & = & \DIM{\SPAN{M_H(\wbf^\ibf_H)\cdots M_1(\wbf^\ibf_1) X^{|\leaves|} | \ibf \in \Ibf_{\calS\cup\calS'}}} \\
 & =  & \DIM{\SPAN{M_H(\wbf^\ibf_H)\cdots M_1(\wbf^\ibf_1)  | \ibf \in \Ibf_{\calS\cup\calS'}}}
\end{eqnarray*}
where the last equality holds because, when $|\PP|=1$, $X^{|\leaves|}=X$ is full row rank. Moreover, under the hypothesis of the proposition, for any distinct $\ibf$ and $\jbf\in\Ibf_{\calS\cup\calS'}$,  $\calD_\ibf \cap \calD_\jbf = \emptyset$, and therefore  
\[\DIM{\SPAN{M_H(\wbf^\ibf_H)\cdots M_1(\wbf^\ibf_1)  | \ibf \in \Ibf_{\calS\cup\calS'}}} = |\Ibf_{\calS\cup\calS'}|.\] 
Therefore, $\RH{\calA_{\calS\cup\calS'}} = |\Ibf_{\calS\cup\calS'}|$; i.e.
\[ S^H -\dim(\KER{\calA_{\calS\cup\calS'}}) = S^H - \dim(\{T\in\TS | \forall \ibf\in\Ibf_{\calS\cup\calS'}, T_\ibf = 0\})
\]
and $\dim(\KER{\calA_{\calS\cup\calS'}}) = \dim(\{T\in\TS | \forall \ibf\in\Ibf_{\calS\cup\calS'}, T_\ibf = 0\})$. Combined with \eqref{reitb}, we obtain
\[\KER{\calA_{\calS\cup\calS'}} = \{T\in\TS | \forall \ibf\in\Ibf_{\calS\cup\calS'}, T_\ibf = 0\}.
\]
This proves Item \ref{epruhu}.

\section{Proof of Proposition \ref{network-cor}}\label{network-cor-proof}

The fact that, $\KER{\calA_{\calS\cup\calS'}}$ is the orthogonal complement of $\TS_{\calS\cup\calS'}$ is a direct consequence of Proposition \ref{nec-ident-network}, Item \ref{epruhu}, and the fact that, when $|\PP|=1$, $\Ibf_{\calS\cup\calS'}=\calS\cup\calS'$. We then deduce that, for any $T'\in \KER{\calA_{\calS\cup\calS'}}$, $\PROJ{\calS\cup\calS'} T' = 0$. A straightforward consequence (see \eqref{dsnsp}) is that $\calA$ satisfies the \SNSP with constants $(\gamma,\rho)=(1,+\infty)$ for $\SF$. 

To calculate $\sigma_{\SF}$, let us consider $\calS$, $\calS'\in\SF$ and $T$ in the orthogonal complement of $\KER{\calA_{\calS\cup\calS'}}$. Using Proposition \ref{nec-ident-network}, Item \ref{epruhu}, we express $T$ under the form $T=\sum_{\ibf \in\calS\cup\calS'} T_\ibf P(\wbf^\ibf)$, where $\wbf^\ibf$ is defined by \eqref{hibf}. Using \eqref{defAS} and \eqref{proj_def}, the linearity of $\calA$ and the fact that, when $|\PP|=1$, $X^{|\leaves|}=X$, we obtain

\begin{eqnarray}
\|\calA_{\calS\cup\calS'} T \|^2 & = & \|\sum_{\ibf \in\Ibf} T_\ibf \calA P(\wbf^\ibf)  \|^2, \nonumber\\
 & = &  \|\sum_{\ibf \in\Ibf} T_\ibf M_H(\wbf^\ibf)\cdots M_1(\wbf^\ibf) X \|^2, \nonumber\\
 & \geq & \sigma^2_{min}(X)  \|\sum_{\ibf \in\Ibf} T_\ibf M_H(\wbf^\ibf)\cdots M_1(\wbf^\ibf) \|^2 \label{tibvt} 
\end{eqnarray}
Let us remind that, applying Proposition \ref{nec-ident-network}, Item \ref{second_item}, the supports of $M_H(\wbf^\ibf)\cdots M_1(\wbf^\ibf)$ (i.e. $\calD_\ibf$) and $M_H(\wbf^\jbf)\cdots M_1(\wbf^\jbf)$ (i.e. $\calD_\jbf$) are disjoint, when $\ibf\neq\jbf$. Let us also add that, since $\calA P(\wbf^\ibf)$ is the matrix of a convolution with a Dirac mass, we have $|\calD_\ibf| = N$, for all $\ibf\in\Ibf$. Combining these two properties with \eqref{tibvt} and reminding that $\|.\|$ is the Frobinius norm, we obtain
\begin{eqnarray*}
\|\calA_{\calS\cup\calS'} T \|^2 & \geq &  \sigma^2_{min}(X)  \sum_{\ibf \in\Ibf} T^2_\ibf \| M_H(\wbf^\ibf)\cdots M_1(\wbf^\ibf) \|^2 \\
 & = &\sigma^2_{min}(X)  N \sum_{\ibf \in\Ibf} T_\ibf^2 = \sigma^2_{min}(X)  N \|T\|^2.
\end{eqnarray*}
Using that $P_{\calS\cup\calS'}T = T$, we deduce the value of $\sigma_{\SF}$ in the proposition.

\section{Proof of Theorem \ref{stabl-rec-network-thm}}\label{stabl-rec-network-thm-proof}

Let us consider a path $\pbf\in\PP$, using \eqref{multiconv}, since all the entries of $M_H(\one_H^{\calS\cup\calS'})\cdots M_1(\one_1^{\calS\cup\calS'})\Idl$ belong to $\{0,1\}$, the restriction of the network to $\pbf$ satisfy the same property. Therefore, we can apply Proposition \ref{network-cor} and Theorem \ref{suf-stble-recovery-sparse} to the restriction of the convolutional linear network to $\pbf$, with
\[X'=Id \qquad\mbox{ and }\qquad Y'= M_H((\overline\wbf')^\pbf_H)\cdots M_1((\overline\wbf')^\pbf_1)
\]
and obtain, when $\frac{\delta^\pbf}{\sqrt{N}\sigma_{min}(X)}\leq \frac{1}{2} \max(\|P(\overline \wbf^\pbf)\|_\infty,\|P((\overline \wbf')^\pbf)\|_\infty)$, for any $p\in [1,\infty[$
\begin{equation}\label{tuoouqtnb}
d_p ( [({\overline\wbf}')^\pbf] , [\overline\wbf^\pbf] ) \leq 7 \frac{(HS)^{\frac{1}{p}}}{\sqrt{N} \sigma_{min}(X)} \varepsilon^{1-H} \delta^\pbf.
\end{equation} 
We also have, using the definition of $X^{|\leaves|}$,
\begin{eqnarray*}
\delta + \eta   & = & \| M_H(\overline\wbf_H)\cdots M_1(\overline\wbf_1)X^{|\leaves|} - Y\| +  \| M_H(\overline\wbf'_H)\cdots M_1(\overline\wbf'_1)X^{|\leaves|} - Y\| \\
& \geq &  \| M_H(\overline\wbf_H)\cdots M_1(\overline\wbf_1)\Idl X - M_H(\overline\wbf'_H)\cdots M_1(\overline\wbf'_1)\Idl X \| \\
&  \geq & \sigma_{min}(X)  \|M_H(\overline\wbf_H)\cdots M_1(\overline\wbf_1)\Idl - M_H(\overline\wbf'_H)\cdots M_1(\overline\wbf'_1)  \Idl\| \\
& = &\sigma_{min}(X)  \sum_{\pbf \in\PP} \| M_H(\overline\wbf^\pbf_H)\cdots M_1(\overline\wbf^\pbf_1) \Idl - M_H((\overline\wbf')^\pbf_H)\cdots M_1((\overline\wbf')^\pbf_1) \Idl \| \\
& = &\sigma_{min}(X)  \sum_{\pbf \in\PP} \delta^\pbf
\end{eqnarray*}
where the penultimate equality is due to Proposition \ref{nec-ident-network}, Item  \ref{premier_item}. Combining this inequality, the definition of $\delta^\pbf$ and a standard norm inequality, we obtain
\begin{equation} \label{oreibtbtsg}
\left(\sum_{\pbf\in \PP} (\delta^\pbf)^p \right)^{\frac{1}{p}} \leq  \sum_{\pbf \in\PP} \delta^\pbf \leq \frac{\delta + \eta }{ \sigma_{min}(X)  }.
\end{equation}

Finally, combining the definition of the metric $\cald_p$ \eqref{Delta_def}, \eqref{tuoouqtnb} and the above inequality we obtain
\begin{eqnarray*}
\cald_p (\class{{\overline\wbf}'}, \class{\overline\wbf}) & \leq &7  \frac{(HS)^{\frac{1}{p}} }{\sqrt{N} \sigma_{min}(X)}  \varepsilon^{1-H} \left(\sum_{\pbf\in \PP} (\delta^\pbf)^p \right)^{\frac{1}{p}}, \\
& \leq & 7  \frac{(HS)^{\frac{1}{p}} }{\sqrt{N} \sigma_{min}(X)^2}  \varepsilon^{1-H} ~(\delta+\eta).\\
\end{eqnarray*}

\end{document}